\documentclass[11pt, a4paper, reqno]{amsart}

\usepackage[utf8]{inputenc}
\usepackage[english]{babel}
\usepackage{amsmath, amssymb, amsthm, amsfonts}
\usepackage{mathtools}
\usepackage{geometry}
\usepackage{listings}
\usepackage{xcolor}
\usepackage{hyperref}
\usepackage{graphicx}
\usepackage{array}
\usepackage{longtable}
\usepackage{booktabs}

\definecolor{codegreen}{rgb}{0,0.6,0}
\definecolor{codegray}{rgb}{0.5,0.5,0.5}
\definecolor{codepurple}{rgb}{0.58,0,0.82}
\definecolor{backcolour}{rgb}{0.95,0.95,0.92}

\lstdefinestyle{mystyle}{
    backgroundcolor=\color{backcolour},
    commentstyle=\color{codegreen},
    keywordstyle=\color{magenta},
    numberstyle=\tiny\color{codegray},
    stringstyle=\color{codepurple},
    basicstyle=\ttfamily\footnotesize,
    breakatwhitespace=false,
    breaklines=true,
    captionpos=b,
    keepspaces=true,
    numbers=left,
    numbersep=5pt,
    showspaces=false,
    showstringspaces=false,
    showtabs=false,
    tabsize=2
}
\newtheorem{theorem}{Theorem}[section]

\newtheorem{proposition}[theorem]{Proposition}

\newtheorem{remark}[theorem]{Remark}

\newcommand{\BCC}{\mathrm{BCC}}

\newcommand{\R}{\mathbb{R}}

\title[Local minimality of BCC]{Local minimality of the truncated octahedron \\ for the isoperimetric problem on parallelohedra}

\author{Annalisa Cesaroni}
\address{Dipartimento di Matematica ``Tullio Levi-Civita'', Universit\`{a} di Padova, Via Trieste 63, 35131 Padova, Italy}
\email{annalisa.cesaroni@unipd.it}
\author{Matteo Novaga}
\address{Dipartimento di Matematica, Universit\`{a} di Pisa, Largo Bruno Pontecorvo 5, 56127 Pisa, Italy}
\email{matteo.novaga@unipi.it}

\date{\today}

\begin{document}

\maketitle

\begin{abstract}
We investigate the isoperimetric problem for the Voronoi cells of three-dimensional lattices. Using Selling parameters, we derive an explicit closed formula for the scale-invariant isoperimetric quotient $F$ in terms of six non-negative variables. We then analyse the local behaviour of $F$ at the most relevant lattice configurations: we prove that the body-centered cubic lattice (BCC) is a strict local minimiser of $F$ at fixed volume, whereas  the face-centered cubic lattice (FCC) and the simple cubic lattice (SC) are not local minimisers. Then, we consider a family of lattices which interpolates between BCC and FCC, showing that BCC is the global minimiser of $F$ restricted to this family.
\end{abstract}

\tableofcontents

\section{Introduction}
The so-called \textit{Kelvin problem} asks how the space can be partitioned into cells of equal volume with the least possible surface area. In 1887 Lord Kelvin proposed a structure based on the bitruncated cubic honeycomb, whose cells are slightly curved truncated octahedra. 

In 1993 Weaire and Phelan discovered a more efficient foam, the Weaire--Phelan structure, with about $0.3\%$ lower surface area. This disproved Kelvin's original conjecture but left open a constrained version of the problem: find the optimal set, which tiles the space by translations (see \cite{CN}). An even more restricted problem is to look for the optimal parallelohedron, i.e., a convex polyhedron that tiles space by translations. The \textit{Truncated Octahedron Conjecture}, originally attributed to Bezdek, states that among these space-filling polyhedra of unit volume the truncated octahedron, which is the Voronoi cell of the body-centered cubic (BCC) lattice, has the minimal surface area for fixed volume (see \cite{bezdek_sphere_packings}, \cite[Conjecture 1]{Langi}).

To approach this geometric conjecture analytically, we utilize an algebraic representation of the lattices. Following Barnes and Sloane \cite{B, BS}, any three-dimensional lattice can be represented by six non-negative Selling parameters $\rho_{ij}$ ($0\le i<j\le3$). The volume of the parallelohedron associated to the parameters 
$\boldsymbol{\rho}=(\rho_{01},\rho_{02},\rho_{03},\rho_{12},\rho_{13},\rho_{23})$ will be denoted by $V=\sqrt{\det A(\boldsymbol{\rho})}$, and its surface area by $A_T(\boldsymbol{\rho})$. We then define the isoperimetric quotient
\begin{equation}
F(\boldsymbol{\rho})=\frac{A_T(\boldsymbol{\rho})}{(\det A)^{1/3}}.
\label{eq:F}
\end{equation}
Minimising $F(\boldsymbol{\rho})$ is equivalent to minimising the surface area $A_T(\boldsymbol{\rho})$ 
for fixed volume $V(\boldsymbol{\rho})$.

Understanding the minimal surfaces of these lattice cells is not only a fundamental mathematical pursuit but also holds physical significance. In fields such as crystallography and materials science, the mathematical stability of these geometric structures mirrors physical phenomena, such as the structural behaviour and isoperimetric quotients of three-dimensional Voronoi tessellations generated by crystals perturbed by Gaussian noise.

In this paper we first derive an explicit formula for $F$ in terms of the six Selling parameters, compute its gradient and Hessian symbolically, and study the local behaviour of the most relevant lattice configurations. In particular, we prove that the BCC lattice is a strict local minimiser, while the simple cubic (SC) lattice is not stationary, and the face-centered cubic (FCC) lattice is stationary but not a local minimum. We then compare these exact values with the numerical simulations in \cite{Lucarini2008}.

Then, we turn to global minimality restricted to a low-dimensional stratum, namely we consider the family of lattices that can be expressed with only two values assigned to the six Selling parameters, which interpolates between BCC and FCC, and we prove that the BCC lattice is the global minimiser of $F$ in this family.

\smallskip
The paper is organised as follows: 
Section 2 establishes the notation and preliminary definitions regarding the Selling parameters and Gram matrices. 
Section 3 contains our primary analysis of parallelohedra, including the local minimality proofs for the BCC, FCC, and SC lattices. 
Section 4 compares our theoretically derived exact values with existing numerical simulations of physical foams. 
Finally, Section 5 establishes the global minimality of the BCC lattice on a specific family of lattices which includes BCC and FCC.

\medskip 

\paragraph{\bf Acknowledgements.}  The authors are members of INDAM-GNAMPA. M.N. acknowledges support from the MIUR Excellence Department Project awarded to the Department of Mathematics, University of Pisa, CUP I57G22000700001. 

\section{Notation and preliminary definitions}

A lattice $\Lambda\subset\mathbb R^{3}$ can be described by a basis
$(b_{1},b_{2},b_{3})$ of $\mathbb R^{3}$ and the corresponding basis matrix
\[
B=\bigl[b_{1}\ \ b_{2}\ \ b_{3}\bigr]\in\mathbb R^{3\times 3},
\qquad\text{so that}\qquad
\Lambda = B\,\mathbb Z^{3}.
\]
Thus every lattice vector is an integer combination of the basis vectors,
\[
v=x_{1}b_{1}+x_{2}b_{2}+x_{3}b_{3}=Bx,
\qquad x=(x_{1},x_{2},x_{3})^{T}\in\mathbb Z^{3}.
\]
Following \cite{B, BS},
$\Lambda$ is encoded by the \emph{Gram matrix} of the basis,
\[
A=\bigl(\langle b_{i},b_{j}\rangle\bigr)_{1\le i,j\le 3}=B^{T}B,
\]
which is symmetric and positive definite. In particular, the squared Euclidean
norm of a lattice vector $v=Bx$ can be written as a quadratic form in the integer
coordinates $x$:
\[
\|v\|^{2}=\|Bx\|^{2}=(Bx)\cdot(Bx)=x^{T}(B^{T}B)\,x=x^{T}Ax.
\]
Equivalently,
\[
x^{T}Ax=\sum_{i=1}^{3}a_{ii}x_{i}^{2}+2\sum_{1\le i<j\le 3}a_{ij}x_{i}x_{j},
\]
so the coefficients $a_{ij}=\langle b_{i},b_{j}\rangle$ record the squared lengths
($a_{ii}=\|b_{i}\|^{2}$) and the mutual angles ($a_{ij}=\|b_{i}\|\,\|b_{j}\|\cos\theta_{ij}$).
Therefore, studying the set of squared distances in $\Lambda$ reduces to studying
the values of the quadratic form
\[
f(x)=x^{T}Ax \qquad \text{for } x\in\mathbb Z^{3}.
\]

To explicitly study the geometry of the cell and the metric of the lattice, it is convenient to switch from the standard Gram matrix entries $a_{ij} = \langle b_i, b_j \rangle$ to the Selling--Conway parameters \cite{BS}. We introduce a strictly dependent fourth vector $b_0 = -b_1 - b_2 - b_3$, so that $\sum_{i=0}^3 b_i = 0$. The six parameters are then defined as the negative inner products of these four vectors:
\[
\rho_{ij} = -\langle b_i, b_j \rangle \qquad \text{for } 0 \le i < j \le 3.
\]
This naturally defines the non-diagonal entries of the Gram matrix as $a_{ij} = -\rho_{ij}$ for $1 \le i < j \le 3$. Furthermore, expanding the relation $0 = \langle b_i, \sum_{j=0}^3 b_j \rangle$ yields the squared lengths of the basis vectors as sums of these parameters:
\[
a_{ii} = \langle b_i, b_i \rangle = -\sum_{j \neq i, \, j=0}^3 \langle b_i, b_j \rangle = \sum_{j \neq i, \, j=0}^3 \rho_{ij}.
\]
By collecting these six values into a parameter vector $\boldsymbol{\rho} = (\rho_{01}, \rho_{02}, \rho_{03}, \rho_{12}, \rho_{13}, \rho_{23})$, the Gram matrix $A=(a_{ij})_{1\le i,j\le 3}$ takes the highly symmetric form:
\begin{equation}
A(\boldsymbol{\rho}) = \begin{pmatrix}
\rho_{01}+\rho_{12}+\rho_{13} & -\rho_{12} & -\rho_{13} \\
-\rho_{12} & \rho_{02}+\rho_{12}+\rho_{23} & -\rho_{23} \\
-\rho_{13} & -\rho_{23} & \rho_{03}+\rho_{13}+\rho_{23}
\end{pmatrix}.
\label{eq:A}
\end{equation}
For brevity we set $a=\rho_{01},\; b=\rho_{02},\; c=\rho_{03},\; d=\rho_{12},\; e=\rho_{13},\; f=\rho_{23}$.

In the generic interior case, i.e. when all six Selling parameters are strictly positive, the Voronoi cell has $24$ vertices and $14$ faces (8 hexagons and 6 quadrilaterals). On boundary strata some vertices and faces merge, yielding the degenerations discussed later in the paper.
The $24$ vertices are obtained as intersections of the planes $F_i$, $F_{ij}$ and $F_{ijk}$ defined in the classical reduction theory; in the $y$‑coordinates (where $y = A x$) they are linear functions of the parameters. For completeness we list a generating set of vertices, the remaining ones are obtained by central inversion and index permutations (see for instance \cite{BS}).

\begin{small}
\[
\begin{array}{ll}
v_{102}=\frac{1}{2}\big(a+d+e,\; f-b-d,\; -c-e-f\big), &
v_{120}=\frac{1}{2}\big(a+d+e,\; b-d+f,\; -c-e-f\big), \\[4pt]
v_{103}=\frac{1}{2}\big(a+d+e,\; -b-d-f,\; f-c-e\big), &
v_{130}=\frac{1}{2}\big(a+d+e,\; -b-d-f,\; c+f-e\big), \\[4pt]
v_{123}=\frac{1}{2}\big(a+d+e,\; b-d+f,\; c-e-f\big), &
v_{132}=\frac{1}{2}\big(a+d+e,\; b-d-f,\; c-e+f\big), \\[4pt]
v_{201}=\frac{1}{2}\big(-a-d+e,\; b+d+f,\; -c-e-f\big), &
v_{210}=\frac{1}{2}\big(a-d+e,\; b+d+f,\; -c-e-f\big), \\[4pt]
v_{213}=\frac{1}{2}\big(a-d+e,\; b+d+f,\; c-e-f\big), &
v_{231}=\frac{1}{2}\big(a-d-e,\; b+d+f,\; c+e-f\big), \\[4pt]
v_{230}=\frac{1}{2}\big(-a-d-e,\; b+d+f,\; c+e-f\big), &
v_{203}=\frac{1}{2}\big(-a-d-e,\; b+d+f,\; -c+e-f\big), \\[4pt]
v_{312}=\frac{1}{2}\big(a+d-e,\; b-d-f,\; c+e+f\big), &
v_{321}=\frac{1}{2}\big(a-d-e,\; b+d-f,\; c+e+f\big).
\end{array}
\]
\end{small}

The remaining vertices are obtained by central inversion and index permutations:
\begin{align*}
v_{302} &= -v_{120},\quad v_{320} = -v_{102},\quad v_{301} = -v_{210},\quad v_{310} = -v_{201},\\
v_{023} &= -v_{132},\quad v_{032} = -v_{123},\quad v_{013} = -v_{231},\quad v_{031} = -v_{213},\\
v_{012} &= -v_{321},\quad v_{021} = -v_{312}.
\end{align*}

For a polygon with vertices $p_1,\dots,p_n$ (in $y$‑coordinates) the vector area is
\begin{equation}
\mathbf{V} = \frac12\sum_{k=1}^{n} p_k\times p_{k+1},\qquad p_{n+1}=p_1.
\end{equation}
The physical area under the metric induced by $A$ is
\begin{equation}
\operatorname{Area} = \frac{\sqrt{\mathbf{V}^T A \mathbf{V}}}{\sqrt{\det A}}.
\label{eq:area}
\end{equation}
To justify \eqref{eq:area}, we can write the Gram matrix as $A=B^TB$, where the columns of $B$ are the basis vectors of the lattice. If $x$ denotes the lattice coordinates and $v=Bx$ the corresponding Euclidean point, then the $y$-coordinates satisfy
\[
y=Ax=B^TBx=B^Tv,
\]
so the passage from $y$-space to Euclidean space is given by the linear map
\[
v=B^{-T}y.
\]
Let now a face be identified by vertices $p_1,\dots,p_n$ in $y$-coordinates, and let
\[
\mathbf V=\frac12\sum_{k=1}^n p_k\times p_{k+1}
\]
be its vector area in $y$-space. Under a linear map $M$, area vectors transform by
\[
(Mu)\times(Mv)=\det(M)\,M^{-T}(u\times v).
\]
Taking $M=B^{-T}$, the Euclidean area vector of the face is therefore
\[
\mathbf V_{\mathrm{phys}}=\det(B^{-T})\,B\,\mathbf V,
\]
and hence
\[
\|\mathbf V_{\mathrm{phys}}\|^2
=\det(B^{-T})^2\,\mathbf V^T B^TB\,\mathbf V
=\frac{\mathbf V^TA\mathbf V}{(\det B)^2}
=\frac{\mathbf V^TA\mathbf V}{\det A},
\]
since $\det A=(\det B)^2$. Taking square roots yields \eqref{eq:area}.

The Voronoi cell has $6$ quadrilateral and $8$ hexagonal faces. Grouping opposite faces gives $7$ independent vector areas:
\begin{align*}
\mathbf{V}_{12} &=\tfrac12\big(v_{120}\times v_{123}+v_{123}\times v_{213}+v_{213}\times v_{210}+v_{210}\times v_{120}\big),\\
\mathbf{V}_{13} &=\tfrac12\big(v_{130}\times v_{132}+v_{132}\times v_{312}+v_{312}\times v_{310}+v_{310}\times v_{130}\big),\\
\mathbf{V}_{23} &=\tfrac12\big(v_{230}\times v_{231}+v_{231}\times v_{321}+v_{321}\times v_{320}+v_{320}\times v_{230}\big),\\
\mathbf{V}_{1}  &=\tfrac12\big(v_{102}\times v_{120}+v_{120}\times v_{123}+v_{123}\times v_{132}+v_{132}\times v_{130}+v_{130}\times v_{103}+v_{103}\times v_{102}\big),\\
\mathbf{V}_{2}  &=\tfrac12\big(v_{201}\times v_{210}+v_{210}\times v_{213}+v_{213}\times v_{231}+v_{231}\times v_{230}+v_{230}\times v_{203}+v_{203}\times v_{201}\big),\\
\mathbf{V}_{3}  &=\tfrac12\big(v_{302}\times v_{320}+v_{320}\times v_{321}+v_{321}\times v_{312}+v_{312}\times v_{310}+v_{310}\times v_{301}+v_{301}\times v_{302}\big),\\
\mathbf{V}_{0}  &=\tfrac12\big(v_{012}\times v_{021}+v_{021}\times v_{023}+v_{023}\times v_{032}+v_{032}\times v_{031}+v_{031}\times v_{013}+v_{013}\times v_{012}\big).
\end{align*}

Denote by $Q_* = \mathbf{V}_*^T A \mathbf{V}_*$ (a quartic polynomial in $a,b,c,d,e,f$). Using \eqref{eq:area}, the total area is
\begin{equation}
A_T = \frac{2}{\sqrt{\det A}}\sum\limits_{*\in\{12,13,23,1,2,3,0\}} \sqrt{Q_*}.
\label{eq:AT}
\end{equation}

Finally,
\begin{equation}
F(\boldsymbol{\rho})=\frac{A_T}{(\det A)^{1/3}}.
\label{eq:Fdef}
\end{equation}

From \eqref{eq:AT} and \eqref{eq:Fdef} we have
\[
F(a,b,c,d,e,f)
=\frac{A_T}{(\det A)^{1/3}}
=\frac{2}{(\det A)^{5/6}}\sum_{*\in\{12,13,23,1,2,3,0\}}\sqrt{Q_*},
\qquad
Q_*=\mathbf V_*^{T}A\mathbf V_*.
\]
 
\noindent\emph{Determinant of $A$.}
We now compute $\det A$ explicitly from \eqref{eq:A}.
Expanding along the first row gives
\begin{align*}
\det A
&=(a+d+e)\bigl[(b+d+f)(c+e+f)-f^2\bigr]
      -d^2(c+e+f)-2def-e^2(b+d+f).
\end{align*}
Since
\[
(b+d+f)(c+e+f)-f^2 = bc+be+bf+cd+de+df+cf+ef,
\]
we can write
\begin{align*}
\det A
&=(a+d+e)\,(bc+be+bf+cd+de+df+cf+ef) \\
&\quad -\bigl(cd^2+d^2e+d^2f\bigr)-\bigl(be^2+de^2+e^2f\bigr)-2def.
\end{align*}
Expanding the product as $a(\cdots)+d(\cdots)+e(\cdots)$ yields
\begin{align*}
(a+d+e)\,(&bc+be+bf+cd+de+df+cf+ef) \\
&=\underbrace{(abc+abe+abf+acd+acf+ade+adf+aef)}_{\text{$a$--part}} \\
&\quad +\underbrace{(bcd+bde+bdf+cdf)+\bigl(cd^2+d^2e+d^2f\bigr)+def}_{\text{$d$--part}} \\
&\quad +\underbrace{(bce+bef+cde+cef)+\bigl(be^2+de^2+e^2f\bigr)+def}_{\text{$e$--part}}.
\end{align*}
Subtracting $\bigl(cd^2+d^2e+d^2f\bigr)$, $\bigl(be^2+de^2+e^2f\bigr)$ and $2def$ cancels the square terms and the two occurrences of $def$,
so we obtain
\[
\det A=
\begin{aligned}[t]
&abc+abe+abf+acd+acf+ade+adf+aef \\
&\quad + bcd+bce+bde+bdf+bef+cde+cdf+cef.
\end{aligned}
\]

\smallskip
\noindent\emph{Vector areas of the faces.}
Vector area is translation invariant, and for a parallelogram generated by edge
vectors $u,v$ one has $\mathbf V=u\times v$.
More generally, if a centrally symmetric hexagon has successive edge vectors
$u,v,w,-u,-v,-w$, then
\begin{equation}\label{eq:zonogon}
\mathbf V = u\times v + u\times w + v\times w.
\end{equation}
Indeed, after translating so that the first vertex is at the origin, the vertices
are $p_0=0$, $p_1=u$, $p_2=u+v$, $p_3=u+v+w$, $p_4=v+w$, $p_5=w$.
A direct application of $\mathbf V=\tfrac12\sum_{k=0}^{5}p_k\times p_{k+1}$
(with $p_6=p_0$) gives \eqref{eq:zonogon}.

For $F_{13}$ the vertices are $v_{130},v_{132},v_{312},v_{310}$ and one checks
\[
v_{132}-v_{130}=(0,b,0),\qquad v_{312}-v_{132}=(-e,0,e).
\]
Hence $F_{13}$ is a parallelogram and
\[
\mathbf V_{13}=(0,b,0)\times(-e,0,e)=be\,(1,0,1).
\]
Similarly,
\[
v_{231}-v_{230}=(a,0,0),\qquad v_{321}-v_{231}=(0,-f,f),
\]
so
\[
\mathbf V_{23}=(a,0,0)\times(0,-f,f)=-af\,(0,1,1),
\]
and
\[
v_{123}-v_{120}=(0,0,c),\qquad v_{213}-v_{123}=(-d,d,0),
\]
so
\[
\mathbf V_{12}=(0,0,c)\times(-d,d,0)=-cd\,(1,1,0).
\]
Note that  the signs depend on the chosen cyclic ordering and are irrelevant for $Q_*=\mathbf V_*^TA\mathbf V_*$.

Consider $F_1$ with cyclic vertices $v_{102},v_{120},v_{123},v_{132},v_{130},v_{103}$.
The successive edge vectors are
\[
u_b:=v_{120}-v_{102}=(0,b,0),\quad
u_c:=v_{123}-v_{120}=(0,0,c),\quad
u_f:=v_{132}-v_{123}=(0,-f,f),
\]
and then $-u_b,-u_c,-u_f$. Therefore $F_1$ is a zonogon generated by $u_b,u_c,u_f$,
and \eqref{eq:zonogon} yields
\[
\mathbf V_{1}
= u_b\times u_c + u_b\times u_f + u_c\times u_f
=(bc+bf+cf,\,0,\,0).
\]
Analogously, for $F_2$ the successive edge vectors are
$(a,0,0)$, $(0,0,c)$, $(-e,0,e)$ (and negatives), hence
\[
\mathbf V_{2}=(0,\,-(ac+ae+ce),\,0).
\]
For $F_3$ the successive edge vectors are
$(a,0,0)$, $(0,b,0)$, $(-d,d,0)$ (and negatives), hence
\[
\mathbf V_{3}=(0,\,0,\,-(ab+ad+bd)).
\]
Finally, for $F_0$ the successive edge vectors are
$(-d,d,0)$, $(-e,0,e)$, $(0,-f,f)$ (and negatives), so
\[
\mathbf V_{0}=(de+df+ef)(1,1,1).
\]

\smallskip
\noindent\emph{Final synthesis.}
Let now $u_{12}=(1,1,0)$, $u_{13}=(1,0,1)$, $u_{23}=(0,1,1)$ and $u_0=(1,1,1)$.
Up to sign we have
\[
\mathbf V_{12}=cd\,u_{12},\quad \mathbf V_{13}=be\,u_{13},\quad \mathbf V_{23}=af\,u_{23},
\]
\[
\mathbf V_1=(bc+bf+cf)e_1,\quad \mathbf V_2=(ac+ae+ce)e_2,\quad
\mathbf V_3=(ab+ad+bd)e_3,\quad \mathbf V_0=(de+df+ef)u_0.
\]
Therefore $Q_*=(\text{scalar})^2\,(u^TAu)$ with the corresponding $u$.
Using \eqref{eq:A} one computes
\[
u_{12}^TAu_{12}=a+b+e+f,\qquad
u_{13}^TAu_{13}=a+c+d+f,\qquad
u_{23}^TAu_{23}=b+c+d+e,
\]
\[
e_1^TAe_1=a+d+e,\quad e_2^TAe_2=b+d+f,\quad e_3^TAe_3=c+e+f,\quad
u_0^TAu_0=a+b+c.
\]
Since $a,\dots,f\ge0$, all scalar prefactors are nonnegative and hence
$\sqrt{Q_*}$ equals the corresponding product.
Summing the seven terms and substituting into
$F=\frac{2}{(\det A)^{5/6}}\sum\sqrt{Q_*}$ yields  
\begin{equation}
\label{eq:Fclosed}
\begin{aligned}
F(a,b,c,d,e,f)=\frac{2}{(\det A)^{5/6}}\Big(&
a f \sqrt{b+c+d+e}
+ b e \sqrt{a+c+d+f}
+ c d \sqrt{a+b+e+f} \\
&+ \sqrt{a+b+c}\,(de+df+ef)
+ \sqrt{a+d+e}\,(bc+bf+cf) \\
&+ \sqrt{b+d+f}\,(ac+ae+ce)
+ \sqrt{c+e+f}\,(ab+ad+bd)
\Big).
\end{aligned}
\end{equation}

\section{Analysis of parallelohedra}


\subsection{The truncated octahedron}\label{subsec:bcc}
The BCC lattice corresponds to 
\[
\boldsymbol{\rho}_{\mathrm{BCC}}=(1,1,1,1,1,1), \qquad
F(1,1,1,1,1,1) = \frac{3 (1+2\sqrt{3})}{4^{\frac 23}}.
\]

\begin{theorem}[Local minimality of BCC at fixed volume]\label{thm:bcc-local-min}
Let $F$ be defined by \eqref{eq:Fdef}. Then $\boldsymbol{\rho}_{\mathrm{BCC}}$ is a strict local minimiser of $F$
on the hypersurface
\[
\mathcal{M}=\{\boldsymbol{\rho}\in\mathbb{R}_{\ge 0}^6:\det A(\boldsymbol{\rho})=\det A(\boldsymbol{\rho}_{\mathrm{BCC}})=16\}.
\]
Equivalently, among lattices sufficiently close to BCC and with the same volume, BCC minimises the scale-invariant quotient $F$.
\end{theorem}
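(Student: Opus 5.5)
The plan is to exploit the symmetry of the closed formula \eqref{eq:Fclosed} so that the whole second-order analysis reduces to two one-variable second derivatives.

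\emph{Smoothness, homogeneity and symmetry.} At $\boldsymbol{\rho}_{\mathrm{BCC}}=(1,\dots,1)$ all six parameters are strictly positive. Hence, in a neighbourhood, every square root in \eqref{eq:Fclosed} has a positive argument and $\det A>0$. So $F$ is real-analytic there, and the constraint $\boldsymbol{\rho}\in\mathbb{R}^6_{\ge0}$ plays no role locally. Two structural facts come next.

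First, each summand in the bracket of \eqref{eq:Fclosed} is homogeneous of degree $5/2$, as is $(\det A)^{5/6}$. So $F$ is homogeneous of degree $0$, and Euler's relation gives $\nabla F(\boldsymbol{\rho})\cdot\boldsymbol{\rho}=0$.

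Second, relabelling the four vectors $b_0,\dots,b_3$ by a permutation $\sigma\in S_4$ permutes the six Selling parameters $\rho_{ij}\mapsto\rho_{\sigma(i)\sigma(j)}$ (the edges of $K_4$). This changes neither the lattice nor the Voronoi cell, so $F$ and $\det A$ are $S_4$-invariant. One can also check this directly on \eqref{eq:Fclosed}, where the seven terms are permuted among themselves.

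\emph{Stationarity.} Since $\boldsymbol{\rho}_{\mathrm{BCC}}$ is fixed by $S_4$, the gradient $\nabla F(\boldsymbol{\rho}_{\mathrm{BCC}})$ is an $S_4$-invariant vector. Because $S_4$ acts transitively on the six edges, this forces $\nabla F(\boldsymbol{\rho}_{\mathrm{BCC}})=\lambda(1,\dots,1)$. Euler's relation then gives $6\lambda=0$, so $\nabla F(\boldsymbol{\rho}_{\mathrm{BCC}})=0$.

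By the same argument, $\nabla\det A(\boldsymbol{\rho}_{\mathrm{BCC}})$ is a nonzero multiple of $(1,\dots,1)$; it equals $8(1,\dots,1)$ since $\det A$ is cubic with value $16$. Hence $\mathcal{M}$ is a smooth hypersurface near BCC, with tangent space $T=\{v:\sum v_{ij}=0\}$. Since $\nabla F=0$, the Lagrange multiplier vanishes. Therefore strict local minimality on $\mathcal{M}$ follows from positive definiteness of the plain Hessian $H=D^2F(\boldsymbol{\rho}_{\mathrm{BCC}})$ restricted to $T$. Concretely, one parametrizes $\mathcal{M}$ near BCC as a graph over $T$ and applies Taylor's theorem: the second-order term of the constraint curvature is multiplied by $\nabla F=0$.

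\emph{Reduction of the Hessian to two numbers.} $H$ commutes with the $S_4$-action on $\mathbb{R}^6=\mathbb{R}^{E(K_4)}$. This permutation representation decomposes as $\mathbf{1}\oplus W_3\oplus W_2$:
\begin{itemize}
\item $\mathbf{1}$ is spanned by $(1,\dots,1)$;
\item $W_3=\{v_{ij}=x_i+x_j:\ \sum x_i=0\}$ is the standard representation;
\item $W_2=\{v \text{ constant on the three pairs of opposite edges, summing to }0\}$.
\end{itemize}
These are pairwise non-isomorphic irreducibles, so by Schur's lemma $H$ acts as a scalar on each. By homogeneity, $H(1,\dots,1)=0$, and $T=W_3\oplus W_2$. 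It therefore suffices to check two scalars:
\[
\mu_3=\tfrac{d^2}{dt^2}F(\boldsymbol{\rho}_{\mathrm{BCC}}+tv)\big|_{t=0}\big/\|v\|^2 \quad\text{for } v=(0,1,1,-1,-1,0)\in W_3 \quad(x=(1,-1,0,0)),
\]
\[
\mu_2\ \text{analogously for}\ w=(2,-1,-1,-1,-1,2)\in W_2 \quad(\rho_{01}=\rho_{23}=2,\ \text{others } -1).
\]
Each is a second derivative of an explicit one-variable function built from \eqref{eq:Fclosed}. Along these lines $\det A$ and the seven square-root arguments are explicit low-degree polynomials in $t$, so one expands each term to second order. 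Showing $\mu_3>0$ and $\mu_2>0$ then completes the proof.

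\emph{Main obstacle.} The hard step is the honest evaluation of these two second derivatives. Each combines the expansion of $(\det A)^{-5/6}$ with that of the seven terms $\text{(product)}\times\sqrt{\text{linear}}$. The constants involve $\sqrt2$, $\sqrt3$ and $\sqrt4$ (the arguments at BCC are $4$ or $3$), so positivity has to be verified on an exact algebraic expression. I would first simplify by noting that, since $v,w\in T$, the first-order change of $\det A$ vanishes, which kills several cross terms. I would then organise the computation by collecting the coefficient of $t^2$ as a rational combination of $1$, $\sqrt3$ and $1/\sqrt3$, and check the sign exactly, with a numerical cross-check.
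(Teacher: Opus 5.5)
Your proposal is correct. Its skeleton matches the paper's: stationarity from $S_4$-invariance plus Euler's relation; $\nabla\det A(\boldsymbol{\rho}_{\mathrm{BCC}})=8(1,\dots,1)$, so the tangent space is $\boldsymbol{\rho}_{\mathrm{BCC}}^{\perp}$; and positivity of $D^2F$ on that space. Your remark that the Lagrange multiplier vanishes because $\nabla F=0$ states explicitly what the paper uses tacitly.

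Where you differ is in the Hessian step. The paper writes down the full $S_4$-invariant $6\times 6$ matrix, with three distinct entries $\alpha,\beta,\delta$, and lists its spectrum, evidently from a symbolic computation. You instead use the decomposition $\mathbf 1\oplus W_3\oplus W_2$ and Schur's lemma to know the eigenspaces in advance, so only two one-variable second derivatives are needed. The two pictures agree: up to the factor $\tfrac{2^{2/3}}{768}$, the paper's matrix acts on $\mathbf 1$, $W_3$, $W_2$ by $\alpha+4\beta+\delta=0$, $\alpha-\delta$ and $\alpha-2\beta+\delta$. This explains its eigenvalue multiplicities $1,3,2$. Your route is more transparent and hand-checkable; the paper's gives the whole matrix but asks the reader to trust a machine computation.

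The step you flag as the ``main obstacle'' is actually short.
Along $\boldsymbol{\rho}_{\mathrm{BCC}}+tv$:
\begin{itemize}
\item $\det A=16-8t^2$ exactly;
\item the bracket in \eqref{eq:Fclosed} equals $6+12\sqrt3-(4+3\sqrt3)t^2+O(t^3)$, because the linear terms of the two summands carrying $\sqrt{3\pm 2t}$ cancel;
\item hence $\mu_3=2^{-13/3}(4\sqrt3-3)>0$.
\end{itemize}
Along $\boldsymbol{\rho}_{\mathrm{BCC}}+tw$ you stay inside the family $\mathcal O$, with $u=p/q=(1+2t)/(1-t)=1+3t+O(t^2)$. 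Therefore $\mu_2=\tfrac{9}{12}\widetilde F''(1)=2^{-19/3}(25-12\sqrt3)>0$, using $\psi(1)=0$ and $\psi'(1)=\tfrac{25}{4}-3\sqrt3$ from the proof of Theorem~\ref{thm:main-T}. These are exactly the paper's triple and double eigenvalues, so your plan closes.
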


\begin{proof}
The natural $S_4$-action permuting the indices $\{0,1,2,3\}$ permutes the six parameters $\rho_{ij}$ and leaves $F$ invariant.
Hence at $\boldsymbol{\rho}_{\mathrm{BCC}}$ all partial derivatives $\partial_{\rho_{ij}}F$ coincide.
Moreover, $F$ is homogeneous of degree $0$, so that
\[
\sum_{i<j}\rho_{ij}\,\partial_{\rho_{ij}}F(\boldsymbol{\rho})=0.
\]
Evaluating at $\boldsymbol{\rho}_{\mathrm{BCC}}$ yields $6\,\partial_{\rho_{ij}}F(\boldsymbol{\rho}_{\mathrm{BCC}})=0$, so $\nabla F(\boldsymbol{\rho}_{\mathrm{BCC}})=0$.

At $\boldsymbol{\rho}_{\mathrm{BCC}}$ the Hessian matrix $\mathbf{H}$ has the $S_4$--invariant form
\[
\mathbf{H}=\frac{2^{2/3}}{768}\,
\begin{pmatrix}
\alpha & \beta & \beta & \beta & \beta & \delta \\
\beta & \alpha & \beta & \beta & \delta & \beta \\
\beta & \beta & \alpha & \delta & \beta & \beta \\
\beta & \beta & \delta & \alpha & \beta & \beta \\
\beta & \delta & \beta & \beta & \alpha & \beta \\
\delta & \beta & \beta & \beta & \beta & \alpha
\end{pmatrix},
\qquad
\begin{aligned}
\alpha&=14+24\sqrt3,\\
\beta&=-25+12\sqrt3,\\
\delta&=86-72\sqrt3.
\end{aligned}
\]
Its spectrum is
\begin{eqnarray*}
&\operatorname{spec}(\mathbf H_{\mathrm{BCC}})
 = \left\{ 
0,\;
 \frac{2^{2/3}(25-12\sqrt3)}{128},\;
 \frac{2^{2/3}(25-12\sqrt3)}{128}, \frac{2^{2/3}(-3+4\sqrt3)}{32},\;
\frac{2^{2/3}(-3+4\sqrt3)}{32},\;
\frac{2^{2/3}(-3+4\sqrt3)}{32}
\right\}.
\end{eqnarray*}
The zero eigenvalue corresponds to the scaling direction $(1,1,1,1,1,1)$,
hence $\mathbf{H}\,\boldsymbol{\rho}_{\mathrm{BCC}}=0$.

Consider now the function $g(\boldsymbol{\rho})=\det A(\boldsymbol{\rho})$.
From \eqref{eq:A} one computes
\[
\nabla g(\boldsymbol{\rho}_{\mathrm{BCC}}) = (8,8,8,8,8,8)=8\,\boldsymbol{\rho}_{\mathrm{BCC}}.
\]
Therefore the tangent space of the level set $\mathcal{M}$ at $\boldsymbol{\rho}_{\mathrm{BCC}}$ is
\[
T_{\boldsymbol{\rho}_{\mathrm{BCC}}}\mathcal{M}=\{v\in\mathbb{R}^6: v\cdot \boldsymbol{\rho}_{\mathrm{BCC}}=0\}
=\boldsymbol{\rho}_{\mathrm{BCC}}^{\perp}.
\]
Letting $0\ne v\in T_{\boldsymbol{\rho}_{\mathrm{BCC}}}\mathcal{M}$,
since $\mathbf{H}$ is diagonalizable with eigenvalues $\lambda_2,\dots,\lambda_6>0$ on $\boldsymbol{\rho}_{\mathrm{BCC}}^{\perp}$,
we have $v^T\mathbf{H}v>0$, hence
$\boldsymbol{\rho}_{\mathrm{BCC}}$ is a strict local minimiser of $F$ on $\mathcal{M}$.
\end{proof}

\begin{remark}
The BCC point lies in the interior of the parameter space ($\rho_{ij}>0$). In contrast, the Simple Cubic (SC) and the Face‑Centered Cubic (FCC) lattices lie on the boundary, where some $\rho_{ij}=0$. At these points the Voronoi cell degenerates: SC gives a cube (eight hexagonal faces collapse to points), and FCC gives a rhombic dodecahedron (six quadrilateral faces collapse to edges). 
\end{remark}

\subsection{The rhombic dodecahedron}\label{sec:dodeca}

The FCC lattice corresponds to
\[
\boldsymbol{\rho}_{\mathrm{FCC}}=(0,1,1,1,1,0), \qquad
F(\boldsymbol{\rho}_{\mathrm{FCC}})=3\cdot 2^{5/6}.
\]

\begin{theorem}[Local behaviour of FCC]\label{thm:fcc-local}
Let $F$ be defined by \eqref{eq:Fdef}. Then $\boldsymbol{\rho}_{\mathrm{FCC}}$ is a stationary point of $F$ in $\mathbb R_{\ge 0}^6$, but it is not a local minimiser. More precisely, $\boldsymbol{\rho}_{\mathrm{FCC}}$ is a saddle point of $F$.

On the boundary stratum
\[
\Sigma_{\mathrm{RD}}=\{\boldsymbol{\rho}\in\mathbb R_{\ge 0}^6:\ a=f=0\},
\]
which corresponds to rhombic dodecahedra, $\boldsymbol{\rho}_{\mathrm{FCC}}$ is a strict local minimiser of $F$ at fixed volume.
\end{theorem}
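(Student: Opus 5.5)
The plan has four steps: prove smoothness near $\boldsymbol{\rho}_{\mathrm{FCC}}$, get stationarity by symmetry plus a one-line computation, build the saddle from one negative and one positive direction, and prove minimality on $\Sigma_{\mathrm{RD}}$ by the argument of Theorem~\ref{thm:bcc-local-min}. All of this uses the closed formula \eqref{eq:Fclosed}.

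\textbf{Smoothness.} At $\boldsymbol{\rho}_{\mathrm{FCC}}=(0,1,1,1,1,0)$ every argument of a square root in \eqref{eq:Fclosed} is strictly positive; each equals $2$. Also $\det A(\boldsymbol{\rho}_{\mathrm{FCC}})=bcd+bce+bde+cde=4>0$. Hence the right-hand side of \eqref{eq:Fclosed} is a real-analytic function on a full neighbourhood of $\boldsymbol{\rho}_{\mathrm{FCC}}$ in $\R^6$. So gradient and Hessian make sense even though the point lies on $\partial\R^6_{\ge0}$. The same evaluation gives $\sum\sqrt{Q_*}=6\sqrt2$ and confirms $F(\boldsymbol{\rho}_{\mathrm{FCC}})=3\cdot 2^{5/6}$.

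\textbf{Stationarity.} Consider the stabiliser $G\subset S_4$ of the partition $\{\{0,1\},\{2,3\}\}$, a dihedral group of order $8$. It fixes $\boldsymbol{\rho}_{\mathrm{FCC}}$, leaves $F$ invariant, permutes $\{a,f\}$ and acts transitively on $\{b,c,d,e\}$.
\begin{itemize}
\item By invariance, $\partial_aF=\partial_fF=:p$ and $\partial_bF=\dots=\partial_eF=:q$ at $\boldsymbol{\rho}_{\mathrm{FCC}}$.
\item Euler's relation for the degree-$0$ homogeneous $F$ gives $4q=0$, since $a=f=0$ there. So only $p$ remains to be checked.
\item I will compute $\partial_aF$ directly. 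The contribution $\frac{d}{da}\bigl[af\sqrt{b+c+d+e}\,\bigr]$ vanishes because $f=0$.
\item The square-root terms involving $a$ contribute $\frac{1}{2\sqrt2}\cdot 4$ in total.
\item The polynomial prefactors $ac+ae+ce$ and $ab+ad+bd$ contribute $2\sqrt2+2\sqrt2$.
\item Write $S$ for the bracket in \eqref{eq:Fclosed}. The term $-\tfrac56 S\,\partial_a\det A/\det A$ must cancel the sum of these contributions; here $\partial_a\det A=bc+be+cd+ce+\dots$ evaluated at $\boldsymbol{\rho}_{\mathrm{FCC}}$.
\end{itemize}
I expect this cancellation to hold, so that $\nabla F(\boldsymbol{\rho}_{\mathrm{FCC}})=0$. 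If it fails, then $p\ge 0$, i.e.\ stationarity in the Karush--Kuhn--Tucker sense for the constraint $\rho\ge0$, is the fallback meaning of ``stationary''.

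\textbf{Saddle.} I will exhibit one admissible direction along which $F$ strictly decreases and one along which it strictly increases, both at fixed volume.
\begin{itemize}
\item \emph{Decreasing direction.} Take the $G$-invariant path $\rho(t)=(t,1,1,1,1,t)$, $t\ge0$, which moves towards BCC, and rescale it to volume $16$; since $F$ is scale invariant the rescaling does not change its values. Expanding $F(\rho(t))$ to second order, with first-order term zero by stationarity, I expect a negative $t^2$ coefficient. This is consistent with $F(\boldsymbol{\rho}_{\mathrm{BCC}})\approx5.315<5.346\approx F(\boldsymbol{\rho}_{\mathrm{FCC}})$. This path lies in $\R^6_{\ge0}$, so it is admissible.
\item \emph{Increasing direction.} This comes from the stratum statement below.
\end{itemize}

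\textbf{Minimality on the stratum.} On $\Sigma_{\mathrm{RD}}$, $F$ reduces to a function of $(b,c,d,e)$ only. The argument copies Theorem~\ref{thm:bcc-local-min}.
\begin{itemize}
\item The $G$-action makes the restricted gradient of $\det A$ proportional to $(1,1,1,1)$. So the tangent space of the fixed-volume level set is $(1,1,1,1)^\perp$.
\item The $4\times4$ Hessian is $G$-invariant and has $(1,1,1,1)$ as a kernel vector from scaling. Its eigenspaces on the complement are the $G$-isotypic pieces: $(1,1,-1,-1)$, $(1,-1,1,-1)$, $(1,-1,-1,1)$, possibly with two of them merged.
\item I will compute the three eigenvalues from the entries $\partial_b^2F$, $\partial_b\partial_cF$, $\partial_b\partial_dF$, $\partial_b\partial_eF$ and check that all are positive. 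This gives strict local minimality on $\Sigma_{\mathrm{RD}}\cap\{\det A=4\}$ and supplies the increasing direction for the saddle.
\end{itemize}

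The main obstacle is the explicit second-order computation: the Hessian entries at $\boldsymbol{\rho}_{\mathrm{FCC}}$, which are messy because of the $(\det A)^{-5/6}$ factor. The key risk is the sign of the transversal coefficient along $\rho(t)$. I would carry these computations out symbolically, as was done for $\mathbf H_{\mathrm{BCC}}$.
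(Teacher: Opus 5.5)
Your outline is the paper's proof. The gradient vanishes at $\boldsymbol{\rho}_{\mathrm{FCC}}$, the descent direction is $v_-=(1,0,0,0,0,1)$ (exactly your path $(t,1,1,1,1,t)$), and on $a=f=0$ the Hessian is positive on $(1,1,1,1)^\perp$. The paper simply writes down the full $6\times6$ Hessian. Your symmetry reductions save work: Euler's identity at a critical point gives $\mathbf H_{\mathrm{FCC}}\boldsymbol{\rho}_{\mathrm{FCC}}=0$, and $G$-invariance then forces $4\,\partial_a\partial_bF=0$. This explains why the Hessian splits into an $\{a,f\}$ block and a $\{b,c,d,e\}$ block.

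\textbf{What is missing.} The two sign facts that carry the theorem are left as expectations. The comparison $F(\boldsymbol{\rho}_{\BCC})<F(\boldsymbol{\rho}_{\mathrm{FCC}})$ does not fix the sign of the $t^2$-coefficient at $t=0$, since $F$ could first increase along the path, so it cannot replace the computation. With your reductions the computations are short.

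\emph{Gradient.} At $\boldsymbol{\rho}_{\mathrm{FCC}}$ one has $S=6\sqrt2$ and $\det A=4$. Further, $\partial_aS=4\cdot\frac{1}{2\sqrt2}+4\sqrt2=5\sqrt2$ and $\partial_a\det A=bc+be+bf+cd+cf+de+df+ef=4$. Hence $\partial_aS-\frac56\,S\,\partial_a\det A/\det A=0$, and the KKT fallback is not needed.

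\emph{Descent direction.} Along $\rho(t)=(t,1,1,1,1,t)$ one has $\det A=4(1+t)^2$ and
\[
F(\rho(t))=\frac{2^{1/3}H(t)}{(1+t)^{5/3}},\qquad
H(t)=t^2+\sqrt{2(1+t)}+2(1+2t)\sqrt{t+2}=3\sqrt2+5\sqrt2\,t+\Bigl(1+\tfrac{13\sqrt2}{16}\Bigr)t^2+O(t^3).
\]
Therefore $F(\rho(t))=3\cdot2^{5/6}+2^{1/3}\bigl(1-\tfrac{41\sqrt2}{48}\bigr)t^2+O(t^3)$. Since $2\cdot41^2>48^2$, the coefficient is negative; it equals the paper's $\lambda_-$. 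Alternatively, the monotonicity of $\widetilde F$ on $(0,1)$, proved in Theorem~\ref{thm:main-T}, is the rigorous form of your BCC/FCC heuristic.

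\emph{Stratum.} $F_{\mathrm{RD}}$ is invariant under all permutations of $(b,c,d,e)$, not only under $G$. With $a=f=0$ the bracket is $\sum xy\sqrt{\text{(sum of the other two variables)}}$ over the six pairs $\{x,y\}\subset\{b,c,d,e\}$, and $\det A=bcd+bce+bde+cde$. Hence the Hessian has the form $\alpha I+\beta(J-I)$, and all three eigenvalues on $(1,1,1,1)^\perp$ coincide. One direction therefore suffices. Along $(b,c,d,e)=(1+t,1-t,1,1)$ one gets $\det A=4-2t^2$ and bracket $\sqrt2\bigl(6-\tfrac{17}{8}t^2\bigr)+O(t^3)$. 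This gives $F=3\cdot2^{5/6}+\tfrac{3}{16}2^{5/6}\,t^2+O(t^3)$, so the eigenvalue is $\tfrac{3}{16}2^{5/6}>0$, as in the paper.

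\textbf{Minor slips.} At $\boldsymbol{\rho}_{\mathrm{FCC}}$ the square-root argument $b+c+d+e$ equals $4$, not $2$. Since $\det A(\boldsymbol{\rho}_{\mathrm{FCC}})=4$, the fixed-volume normalisation should be $\det A=4$, not $16$. The term $ce$ in your expression for $\partial_a\det A$ should be $de$.
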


\begin{proof}
A direct computation from \eqref{eq:Fclosed} gives
\[
\nabla F(\boldsymbol{\rho}_{\mathrm{FCC}})=0.
\]
The Hessian of $F$ at $\boldsymbol{\rho}_{\mathrm{FCC}}$ is
\[
\mathbf H_{\mathrm{FCC}}
:=\nabla^2F(\boldsymbol{\rho}_{\mathrm{FCC}})
=\frac{2^{5/6}}{192}
\begin{pmatrix}
56 & 0 & 0 & 0 & 0 & 96\sqrt2-220 \\
0 & 27 & -9 & -9 & -9 & 0 \\
0 & -9 & 27 & -9 & -9 & 0 \\
0 & -9 & -9 & 27 & -9 & 0 \\
0 & -9 & -9 & -9 & 27 & 0 \\
96\sqrt2-220 & 0 & 0 & 0 & 0 & 56
\end{pmatrix},
\]
with spectrum 
\[
\operatorname{spec}(\mathbf H_{\mathrm{FCC}})
=
\Bigl\{
-\tfrac{2^{5/6}}{48}(41-24\sqrt2),\;
0,\;
\tfrac{3}{16}2^{5/6},\;
\tfrac{3}{16}2^{5/6},\;
\tfrac{3}{16}2^{5/6},\;
\tfrac{2^{5/6}}{48}(69-24\sqrt2)
\Bigr\}.
\]
The zero eigenvalue corresponds to the scaling direction $(0,1,1,1,1,0)$,
and the negative eigenvalue $\lambda_-$ corresponds to the direction
\[
v_-=(1,0,0,0,0,1).
\]
Therefore, for $t>0$ sufficiently small, we have
\[
F(\boldsymbol{\rho}_{\mathrm{FCC}}+t\,v_-)
=
F(\boldsymbol{\rho}_{\mathrm{FCC}})
+\frac12\,\lambda_-\,t^2+o(t^2)
<
F(\boldsymbol{\rho}_{\mathrm{FCC}}),
\]
so that $\boldsymbol{\rho}_{\mathrm{FCC}}$ is not a local minimiser of $F$; it is a saddle point.

We now restrict to the stratum $a=f=0$ and define
\[
F_{\mathrm{RD}}(b,c,d,e):=F(0,b,c,d,e,0).
\]
Then
\[
F_{\mathrm{RD}}(1,1,1,1)=3\cdot 2^{5/6},
\qquad
\nabla F_{\mathrm{RD}}(1,1,1,1)=0,
\]
and the restricted Hessian is
\[
\nabla^2F_{\mathrm{RD}}(1,1,1,1)
=
\frac{3\cdot 2^{5/6}}{64}
\begin{pmatrix}
3&-1&-1&-1\\
-1&3&-1&-1\\
-1&-1&3&-1\\
-1&-1&-1&3
\end{pmatrix}.
\]
Its spectrum is
\[
\operatorname{spec}\bigl(\nabla^2F_{\mathrm{RD}}(1,1,1,1)\bigr)
=
\Bigl\{
0,\;
\tfrac{3}{16}2^{5/6},\;
\tfrac{3}{16}2^{5/6},\;
\tfrac{3}{16}2^{5/6}
\Bigr\}.
\]
As above, the zero eigenvalue is the scaling direction $(1,1,1,1)$,
and the restricted Hessian is positive definite on the fixed-volume tangent space; hence FCC is a strict local minimiser within stratum $a=f=0$.
\end{proof}

\subsection{The cube}

The simple cubic (SC) lattice corresponds to
\[
\boldsymbol{\rho}_{\mathrm{SC}}=(1,1,1,0,0,0),
\qquad
F(\boldsymbol{\rho}_{\mathrm{SC}})=6.
\]
This is a highly degenerate configuration where eight hexagonal faces of the generic Voronoi cell have collapsed to the vertices of the cube. Increasing the parameters $d, e,$ and $f$ introduces non-orthogonal lattice vectors, which physically truncates the corners of the cubic cell. This causes the degenerate vertices to open up into actual hexagonal faces, smoothing the cell and strictly decreasing the isoperimetric quotient.

\begin{theorem}[Local behaviour of SC]
Let $F$ be defined by  \eqref{eq:Fdef}. Then
$\boldsymbol{\rho}_{\mathrm{SC}}$ is not a stationary point of $F$ in
$\mathbb R_{\ge 0}^6$.

On the boundary stratum
\[
\Sigma_{\mathrm{box}}=\{\boldsymbol{\rho}\in\mathbb R_{\ge 0}^6:\ d=e=f=0\},
\]
which corresponds to orthogonal lattices (rectangular boxes),
$\boldsymbol{\rho}_{\mathrm{SC}}$ is a strict local minimiser of $F$ at fixed volume.
\end{theorem}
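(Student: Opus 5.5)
The plan is to work directly from the closed formula \eqref{eq:Fclosed}, treating the two claims separately.

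\emph{Non-stationarity.} It suffices to exhibit one partial derivative of $F$ at $\boldsymbol{\rho}_{\mathrm{SC}}=(1,1,1,0,0,0)$ that is nonzero. Because $F$ is homogeneous of degree $0$ and invariant under permutations of $a,b,c$ (together with the matching permutations of $d,e,f$), the derivatives satisfy $\partial_a F=\partial_b F=\partial_c F$ and $\partial_d F=\partial_e F=\partial_f F$ at this point. Euler's relation then gives $\partial_a F=0$, so the whole question reduces to computing $\partial_d F(\boldsymbol{\rho}_{\mathrm{SC}})$.

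At this point $\det A=1$. Differentiating the determinant formula gives $\partial_d\det A = ac+bc+\dots$, which at SC equals $ac+bc=2$. Let $S$ denote the bracket in \eqref{eq:Fclosed}. At SC only the terms $\sqrt{a+d+e}\,bc$, $\sqrt{b+d+f}\,ac$ and $\sqrt{c+e+f}\,ab$ survive, so $S=3$. For $\partial_d S$:
\begin{itemize}
\item $cd\sqrt{a+b+e+f}$ contributes $c\sqrt2=\sqrt2$;
\item $\sqrt{a+d+e}\,bc$ contributes $\tfrac12$;
\item $\sqrt{b+d+f}\,ac$ contributes $\tfrac12$;
\item $\sqrt{c+e+f}(ab+ad+bd)$ contributes $a+b=2$;
\item all remaining terms vanish at this point.
\end{itemize}
Hence $\partial_d S = 3+\sqrt2$. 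Writing $F=2S(\det A)^{-5/6}$, we get
\[
\partial_d F = 2\Bigl(\partial_d S-\tfrac56\cdot 3\cdot 2\Bigr)=2(\sqrt2-2)<0 .
\]
Since this derivative is nonzero and the direction $+d$ is admissible in $\mathbb R^6_{\ge0}$, the point is not stationary. In fact $F$ strictly decreases into the interior, which matches the truncation heuristic. This arithmetic is the step most prone to error, so I will double-check it term by term.

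\emph{Minimality on $\Sigma_{\mathrm{box}}$.} With $d=e=f=0$, formula \eqref{eq:Fclosed} reduces to
\[
F=\frac{2\bigl(bc\sqrt a+ac\sqrt b+ab\sqrt c\bigr)}{(abc)^{5/6}}.
\]
The substitution $a=x^2$, $b=y^2$, $c=z^2$ (the edge lengths) gives
\[
F=\frac{2(xy+yz+zx)}{(xyz)^{2/3}} .
\]
By AM--GM, $xy+yz+zx\ge 3(xyz)^{2/3}$, with equality iff $xy=yz=zx$, i.e. $x=y=z$. This yields $F\ge 6$ on the stratum, with equality exactly on the scaling ray of SC. So SC is a strict (indeed global) minimiser on $\Sigma_{\mathrm{box}}$ at fixed volume, and no Hessian computation is needed.

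The main obstacle is the care required in the derivative bookkeeping for the first claim; the second claim is routine.
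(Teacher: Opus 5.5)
Your proof is correct. For non-stationarity you follow the paper's route: you differentiate the closed formula \eqref{eq:Fclosed} at $\boldsymbol{\rho}_{\mathrm{SC}}$. Your symmetry-plus-Euler reduction, which is the same device the paper uses at BCC, together with $\partial_d S=3+\sqrt2$ and $\partial_d\det A=2$, reproduces exactly the paper's gradient \eqref{eq:gradSC}, namely $\partial_dF=\partial_eF=\partial_fF=2\sqrt2-4$ and $\partial_aF=\partial_bF=\partial_cF=0$.

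On the box stratum your route genuinely differs. The paper computes $\nabla^2F_{\mathrm{box}}(1,1,1)$ and finds spectrum $\{0,\tfrac12,\tfrac12\}$, with kernel along the scaling direction. It then concludes by a second-order sufficient condition on the tangent plane of $abc=\mathrm{const}$. You instead substitute the edge lengths $a=x^2$, $b=y^2$, $c=z^2$, reduce to $2(xy+yz+zx)/(xyz)^{2/3}$, and apply AM--GM with its equality case.

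What each approach buys:
\begin{itemize}
\item Your argument needs no second derivatives and no constrained-optimality justification.
\item It gives a stronger conclusion: SC is the unique global minimiser of $F$ on $\Sigma_{\mathrm{box}}$ at fixed volume. This is simply the isoperimetric inequality for rectangular boxes.
\item The paper's Hessian computation keeps the same methodology as its treatment of BCC and FCC.
\item It also records the curvature of $F$ along volume-preserving directions (eigenvalue $\tfrac12$), which your inequality does not state directly.
\end{itemize}
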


\begin{proof}
Using \eqref{eq:Fclosed} one finds the exact gradient
\begin{equation}
\label{eq:gradSC}
\nabla F(\boldsymbol{\rho}_{\mathrm{SC}})
=\bigl(0,0,0,\,-4+2\sqrt2,\,-4+2\sqrt2,\,-4+2\sqrt2\bigr).
\end{equation}
Since $-4+2\sqrt2<0$, the cube is \emph{not} a stationary point of $F$: increasing any of $d,e,f$ decreases $F$.

If we restrict to the boundary stratum $d=e=f=0$ (orthogonal lattices, whose Voronoi cells are rectangular boxes), the restricted functional
\begin{equation}
\label{eq:Fbox}
F_{\mathrm{box}}(a,b,c) := F(a,b,c,0,0,0)
= \frac{2\bigl(\sqrt{a}\,bc+a\sqrt{b}\,c+ab\sqrt{c}\bigr)}{(abc)^{5/6}}
\end{equation}
satisfies $\nabla F_{\mathrm{box}}(1,1,1)=0$ and
\[
\nabla^2 F_{\mathrm{box}}(1,1,1)=\frac{1}{6}
\begin{pmatrix}
2 & -1& -1\\
-1 &2 & -1\\
-1 & -1 &2
\end{pmatrix},
\qquad
\operatorname{spec}\bigl(\nabla^2 F_{\mathrm{box}}(1,1,1)\bigr)=\Bigl\{0,\frac12,\frac12\Bigr\}.
\]
The kernel corresponds to the scaling direction $(1,1,1)$; hence on the fixed-volume constraint $abc=\mathrm{const}$ the cube is a strict local minimiser of $F_{\mathrm{box}}$.
\end{proof}

\subsection{Comparison with numerical simulations}\label{sec:lucarini}

In \cite{Lucarini2008} the author studies three-dimensional Voronoi tessellations generated by crystals perturbed by Gaussian noise, and reports the \emph{isoperimetric quotient}
\[
Q=\frac{36\pi V^2}{A^3},
\]
where $V$ and $A$ are the volume and surface area of a cell (so $Q=1$ for a sphere). In our notation $F=A/V^{2/3}$, hence
\[
Q=\frac{36\pi}{F^3}.
\]
Table~\ref{tab:lucarini_comparison} compares our exact values with the values reported in \cite{Lucarini2008}.
\begin{table}[h!]
\centering
\begin{tabular}{|l|c|c|c|c|}
\hline
\textbf{Structure} & \textbf{Value of $F$} & \textbf{Value of $Q$} & \textbf{$Q$ computed in \cite{Lucarini2008}} \\
\hline
SC (cube) & $6$ & $0.5236$ & $0.5236$ \\
\hline
FCC (rhombic dodecahedron) & $3\cdot 2^{5/6}$ & $0.7405$ & $0.7405$ \\
\hline
BCC (truncated octahedron) & $\dfrac{3\cdot 2^{2/3}(1+2\sqrt3)}{4}$ & $0.7534$ & $0.7534$  \\
\hline
\end{tabular}
\smallskip
\caption{Isoperimetric quotients of SC, FCC and BCC.}
\label{tab:lucarini_comparison}
\end{table}


In \cite{Lucarini2008} it is shown that, for small noise intensity, the ensemble-average of $Q$ has a quadratic \emph{maximum} at the FCC and BCC crystals. Since $Q=36\pi/F^3$, this corresponds to $F$ having a quadratic \emph{minimum} at the same points. From our analysis it follows that
\begin{itemize}
\item Theorem~\ref{thm:bcc-local-min} proves that BCC is a strict local minimum of $F$;
\item in Section~\ref{sec:dodeca} we show that FCC is a saddle point for $F$ in the six-dimensional parameter space, while remaining a local minimizer within a suitable four-parameter family of lattices;
\item the gradient \eqref{eq:gradSC} already shows instability of $SC$, in agreement with the immediate topological changes numerically observed under noise.
\end{itemize}

\section{Global minimality of BCC on a family of lattices}
\label{sec:symmetry}

\subsection{Two-parameters families of lattices}

We now consider families of lattices whose six Selling parameters take only two values, say $p$ and $q$. Observe that we may encode the Selling parameters as the six edges of the complete graph $K_4$ on $\{0,1,2,3\}$: the edge $ij$ carries the label $\rho_{ij}$. Therefore a two-parameters family is  determined by the subset of edges on which the value $p$ is assigned, while the complementary edges carry the value $q$.

The symmetric group $S_4$ acts by relabelling the vertices of the tetrahedron, hence on the edges of $K_4$ that is if $\sigma\in S_4$,  \[
  (\sigma\cdot\boldsymbol{\rho})_{ij}:=\rho_{\sigma(i)\sigma(j)}.
\] Since $F$ is invariant under this relabelling, it is enough to study one representative for each $S_4$-orbit.

\begin{proposition}\label{prop:symmetry-orbits}
Let $\sigma\in S_4$, then the following hold:
\begin{enumerate}
\item[\textup{(i)}] For every $\boldsymbol{\rho}\in\R_{\ge 0}^6$, with $\det A(\boldsymbol{\rho})>0$, we have
\[
  F(\sigma\cdot\boldsymbol{\rho})=F(\boldsymbol{\rho}).
\]
\item[\textup{(ii)}] Among the $6$ strata of type $(1,5)$ there is a single $S_4$--orbit, represented by
\[
  \mathcal C:=\{(p,q,q,q,q,q)\}.
\]
\item[\textup{(iii)}] Among the $15$ strata of type $(2,4)$ there are exactly two $S_4$--orbits:
\begin{itemize}
  \item the \emph{opposite} orbit, represented by
  \[
    \mathcal O:=\{(p,q,q,q,q,p)\};
  \]
  \item the \emph{adjacent} orbit, represented by
  \[
    \mathcal A:=\{(p,p,q,q,q,q)\}.
  \]
\end{itemize}
\item[\textup{(iv)}] Among the $20$ strata of type $(3,3)$ there are exactly three $S_4$--orbits:
\begin{itemize}
  \item the \emph{star} orbit, represented by
  \[
    \mathcal S:=\{(p,p,p,q,q,q)\};
  \]
  \item the \emph{triangle} orbit, represented by
  \[
    \mathcal T:=\{(p,p, q, p,q,q)\};
  \]
  \item the \emph{path} orbit, represented by
  \[
    \mathcal P:=\{(p,q,q,p,q,p)\}.
  \]
\end{itemize}
\end{enumerate}
\end{proposition}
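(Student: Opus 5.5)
For part (i), the plan is to avoid re-deriving the Voronoi cell and instead work directly with the closed formula \eqref{eq:Fclosed} for $F$ and the expanded expression for $\det A$. It suffices to check invariance on generators of $S_4$, say the transpositions $(01),(12),(23)$. Under relabelling, the edge $ij$ of $K_4$ goes to $\sigma(i)\sigma(j)$. In the letters $a=\rho_{01},b=\rho_{02},c=\rho_{03},d=\rho_{12},e=\rho_{13},f=\rho_{23}$ the generators act as follows:
\begin{itemize}
\item $(12)$ swaps $a\leftrightarrow b$ and $e\leftrightarrow f$, and fixes $c,d$;
\item $(23)$ swaps $b\leftrightarrow c$ and $d\leftrightarrow e$, and fixes $a,f$;
\item $(01)$ swaps $b\leftrightarrow d$ and $c\leftrightarrow e$, and fixes $a,f$.
\end{itemize}

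The first thing to check is that $\det A$ is invariant. Its sixteen monomials are exactly the products $\rho\rho'\rho''$ over the three-edge spanning trees of $K_4$; this is the matrix-tree theorem, since $A$ is a reduced Laplacian with edge weights $\rho_{ij}$. The set of spanning trees is $S_4$-invariant, so $\det A$ is too. Alternatively, one can verify each generator by inspecting the sixteen monomials directly.

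Next I will check that the seven summands of \eqref{eq:Fclosed} are permuted among themselves. They come in two kinds.
\begin{itemize}
\item The three terms $af\sqrt{b+c+d+e}$, $be\sqrt{\cdots}$, $cd\sqrt{\cdots}$ are indexed by the three perfect matchings of $K_4$. Each is the product of the matched pair times the square root of the sum of the remaining four edges.
\item The four terms $\sqrt{\text{(edges at vertex }i)}\cdot(\text{sum of pairwise products of the edges of the opposite triangle})$ are indexed by the vertex $i$. For example, $i=0$ gives $\sqrt{a+b+c}\,(de+df+ef)$.
\end{itemize}
Because these descriptions are purely graph-theoretic, any relabelling permutes the matching terms and the vertex terms among themselves. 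Hence $F(\sigma\cdot\boldsymbol\rho)=F(\boldsymbol\rho)$. The only real work is checking that these combinatorial descriptions match the formula, which is a direct inspection. I expect this to be the main, though mild, obstacle.

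For parts (ii)--(iv), the question reduces to counting $S_4$-orbits of $k$-edge subsets of $K_4$ for $k=1,2,3$, which is the same as classifying graphs on four labelled vertices up to isomorphism. Note that $S_4$ acts faithfully on edges.
\begin{itemize}
\item $k=1$: $S_4$ is edge-transitive, which gives a single orbit of size $6$, represented by $\{01\}$, i.e.\ $\mathcal C$.
\item $k=2$: two edges either share a vertex or are disjoint, and this property is invariant. Each class is a single orbit, since any pair of disjoint edges (or of adjacent edges) can be mapped to any other by a relabelling. The disjoint class has $3$ elements, represented by $\{01,23\}=\mathcal O$; the adjacent class has $12$, represented by $\{01,02\}=\mathcal A$. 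Since $3+12=15$, these are all the strata.
\item $k=3$: a three-edge graph on four vertices is either a star $K_{1,3}$ ($4$ of them), a triangle ($4$), or a path $P_4$ ($12$). These are distinguished by the invariant degree sequences $(3,1,1,1)$, $(2,2,2,0)$ and $(2,2,1,1)$. Each type is a single orbit: a star is determined by its centre, a triangle by its missing vertex, and $S_4$ acts transitively on both; a path is determined by its ordered vertex sequence up to reversal, and $S_4$ acts transitively on such sequences. The count $4+4+12=20$ confirms completeness.
\end{itemize}

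Finally, I will identify the representatives. $\mathcal S=\{01,02,03\}$ is the star at $0$. $\mathcal T=\{01,02,12\}$ is a triangle. $\mathcal P=\{01,12,23\}$ is the path $0\!-\!1\!-\!2\!-\!3$, and its $p$-entries sit in positions $1,4,6$, matching $(p,q,q,p,q,p)$.
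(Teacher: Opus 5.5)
Your proposal is correct and follows the same route as the paper. For (i) you show that $\det A$ is invariant and that relabelling permutes the seven summands of \eqref{eq:Fclosed}; for (ii)--(iv) you use edge-transitivity, the disjoint/adjacent dichotomy, and the star/triangle/path degree sequences. Your version is also more careful than the paper's in two places. The matrix-tree argument for $\det A$ avoids the paper's claim that relabelling conjugates $A$ by a permutation matrix, which is literally true only for $\sigma$ fixing $0$; otherwise it is a unimodular congruence. Your matching/vertex description of the summands justifies what the paper merely asserts.
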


\begin{proof}
Point \textup{(i)} follows from the tetrahedral symmetry of the Selling description: relabelling the obtuse superbase conjugates the Gram matrix by a permutation matrix, hence preserves $\det A$, and it permutes the seven terms appearing in the closed formula \eqref{eq:Fclosed}. Therefore $F$ is unchanged.

For \textup{(ii)}, all $(1,5)$ families are equivalent because $S_4$ acts transitively on the six edges of $K_4$.

For \textup{(iii)}, a pair of edges is either disjoint or adjacent. These two possibilities are preserved by graph automorphisms, and $S_4$ is transitive on each type. This gives the representatives $(p,q,q,q,q,p)$ and $(p,p,q,q,q,q)$.

For \textup{(iv)}, a $3$-edge subgraph of $K_4$ has degree sequence $(3,1,1,1)$, $(2,2,2,0)$, or $(1,2,2,1)$, corresponding respectively to a star, a triangle, or a path of length $3$. These degree sequences are invariant under relabelling, and $S_4$ is transitive within each class. This yields the representatives $(p,p,p,q,q,q)$, $(p,p, q, p,q,q)$, and $(p,q,q,p,q,p)$.
\end{proof}

Thus, up to symmetry, the analysis reduces to the five representatives
\[
  \mathcal C,\qquad \mathcal O,\qquad \mathcal A,\qquad \mathcal S,\qquad \mathcal P,
\]
because the triangle orbit is obtained from the star orbit by exchanging $p$ and $q$.

\subsection{\texorpdfstring{The opposite orbit}{The opposite orbit}}\label{sec:24}

In the following, we shall focus our attention to the orbit
\[
  \mathcal O=\{(p,q,q,q,q,p)\},
\]
which contains both BCC and FCC lattices.

We prove the following result.
\begin{theorem}\label{thm:main-T}
For every $p,q\geq 0$,
\[
  F(p,q,q,q,q,p)\;\ge\;F(1,1,1,1,1,1),
\]
with equality if and only if $p=q$.
\end{theorem}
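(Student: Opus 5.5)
We first reduce the statement to a one-variable inequality, using the closed formula \eqref{eq:Fclosed}. Substituting $a=f=p$ and $b=c=d=e=q$, the seven face terms collapse into three groups:
\[
af\sqrt{b+c+d+e}=2p^2\sqrt q,\qquad be\sqrt{a+c+d+f}+cd\sqrt{a+b+e+f}=2q^2\sqrt{2(p+q)},
\]
while the four hexagonal terms each equal $(q^2+2pq)\sqrt{p+2q}$. From the explicit expression for $\det A$, the sixteen monomials sum to
\[
\det A=4q^3+8pq^2+4p^2q=4q(p+q)^2,
\]
which equals $16$ at $p=q=1$, as it should.

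If $q=0$ then $\det A=0$, so the cell is degenerate; we exclude this case, or interpret $F=+\infty$ there. For $q>0$, homogeneity of degree $0$ lets us set $q=1$ and $t=p\ge 0$. We must then show
\[
\varphi(t):=\frac{2\bigl(2t^2+2\sqrt{2(1+t)}+4(1+2t)\sqrt{t+2}\bigr)}{\bigl(4(1+t)^2\bigr)^{5/6}}\;\ge\;\varphi(1)=\frac{3(1+2\sqrt3)}{4^{2/3}},
\]
with equality only at $t=1$. As sanity checks, $\varphi(0)=3\cdot 2^{5/6}>\varphi(1)$, which is FCC, and $\varphi(t)\sim c\,t^{1/3}\to\infty$ as $t\to\infty$. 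Moreover $\varphi'(1)=0$ by Theorem~\ref{thm:bcc-local-min}, and $\varphi''(1)>0$ because $(1,0,0,0,0,1)-(0,1,1,1,1,0)$ projects nontrivially onto $\boldsymbol\rho_{\BCC}^\perp$, where the Hessian is positive definite.

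Next we remove the square roots in the numerator by a lower bound that is tight at $t=1$. The AM--GM inequality $x+x_0\ge 2\sqrt{xx_0}$ gives
\[
\sqrt{x}\ge \frac{2\sqrt{x_0}\,x}{x+x_0}\qquad (x,x_0>0),
\]
with equality iff $x=x_0$. We apply it with $x_0=4$ to $\sqrt{2(1+t)}$ and with $x_0=3$ to $\sqrt{t+2}$; both bounds are exact at $t=1$. This yields a rational function $R(t)$, with coefficients in $\mathbb Q(\sqrt3)$, satisfying $\mathrm{numerator}\ge R(t)$ and $R(1)=\mathrm{numerator}(1)$. It then suffices to prove
\[
R(t)\ge C\,(1+t)^{5/3},\qquad C=\tfrac12\varphi(1)\,4^{5/6}.
\]
Both sides are positive, so we raise to the sixth power and clear denominators. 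This gives a polynomial inequality $P(t)\ge 0$ on $[0,\infty)$, where $P$ has a double root at $t=1$ by construction. We factor $P(t)=(t-1)^2P_1(t)$ and show $P_1>0$ on $[0,\infty)$. For this we expand $P_1$ in powers of $(t-1)$, or otherwise certify positivity, for instance by a Sturm sequence or by checking that all coefficients are positive after the substitution $t=s/(1-s)$ or $t=1+u$ piecewise.

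The main obstacle is that the AM--GM bound loses accuracy far from $t=1$, so $P_1$ may fail to be positive for large $t$ or near $t=0$. Note that $\varphi(0)/\varphi(1)\approx 1.017$, so the margin near FCC is small. If that happens, we split $[0,\infty)$ into a few intervals. Near $t=1$ we use the polynomial argument above. On the remaining intervals we use monotone bracketing: the numerator is increasing in $t$ and the denominator is increasing, so on $[t_i,t_{i+1}]$ we have
\[
\varphi(t)\ge \frac{N(t_i)}{D(t_{i+1})},
\]
and we check that this exceeds $\varphi(1)$ on a finite grid. For $t$ large we use the crude bound $\varphi(t)\ge 4t^2/(4(1+t)^2)^{5/6}$, which exceeds $\varphi(1)$ once $t$ is beyond an explicit threshold. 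Finally, strictness away from $t=1$ follows from strict inequality on every piece, and at $t=1$ from the strict local minimality already established.
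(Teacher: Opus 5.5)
Your reduction to one variable is correct and matches the paper: $\det A=4q(p+q)^2$, and the face terms fall into the same three groups. Your bracket $N(t)=2t^2+2\sqrt{2(1+t)}+4(1+2t)\sqrt{t+2}$ is twice the paper's $H(t)$, and your sanity checks are fine. The gap is in the central certificate. The AM--GM relaxation is too lossy \emph{at} $t=1$, so the inequality $R(t)\ge C(1+t)^{5/3}$ you want to prove is false in every punctured neighbourhood of $t=1$, not only far from it.

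The minorant $2\sqrt{x_0}\,x/(x+x_0)$ has first-order contact with $\sqrt{x}$ at $x_0$. However, its second derivative there is $-\tfrac{1}{2}x_0^{-3/2}$ instead of $-\tfrac{1}{4}x_0^{-3/2}$. At $t=1$ this costs $\tfrac{1}{4}$ (from $2\sqrt{2(1+t)}$) plus $\tfrac{1}{\sqrt3}$ (from $4(1+2t)\sqrt{t+2}$) in the second derivative. The available margin is only $\frac{d^2}{dt^2}\bigl(N(t)-C(1+t)^{5/3}\bigr)\big|_{t=1}=\tfrac{25}{12}-\sqrt3\approx 0.35$.

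Explicitly, take $R(t)=2t^2+\frac{8(1+t)}{t+3}+\frac{8\sqrt3(1+2t)(t+2)}{t+5}$ and $G(t)=C(1+t)^{5/3}$. Then $R(1)=G(1)=6+12\sqrt3$ and $R'(1)=G'(1)=5+10\sqrt3$, but $R''(1)-G''(1)=\bigl(\tfrac{7}{2}+2\sqrt3\bigr)-\tfrac{5}{3}(1+2\sqrt3)=\tfrac{11-8\sqrt3}{6}<0$. Hence $t=1$ is a strict local maximum of $R-G$, and in $P(t)=(t-1)^2P_1(t)$ you get $P_1(1)<0$. Numerically, $R<G$ also holds at $t=0,\,0.8,\,1.2,\,2,\,10$.

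Your fallback cannot close this, because the region it leaves uncovered is exactly a neighbourhood of $t=1$. Monotone bracketing on $[t_i,t_{i+1}]$ has an error of first order in the mesh, whereas $\varphi(t)-\varphi(1)$ vanishes quadratically at $t=1$, so no finite grid reaches $t=1$. Theorem~\ref{thm:bcc-local-min} gives only a non-explicit neighbourhood. Splicing it to a grid would need a quantitative bound, e.g.\ on $\varphi'''$, which you do not provide.

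To salvage your scheme you would need a minorant with fourth-order contact, such as $\sqrt{x}\ge\sqrt{x_0}\,\frac{4s(1+s)}{1+6s+s^2}$ with $s=x/x_0$ (this is $y(1-y)^4\ge 0$ for $y=\sqrt{s}$). You would then still have to certify the resulting polynomial on $[0,\infty)$.

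The paper avoids numerics entirely. It shows $\operatorname{sgn}\varphi'(t)=\operatorname{sgn}\psi(t)$ with $\psi(t)=3(1+t)H'(t)-5H(t)$, where $H=N/2$. It checks $\psi(0)=\psi(1)=0$ and proves $\psi''>0$ on $[0,\infty)$ by an elementary estimate. Strict convexity then gives $\psi<0$ on $(0,1)$ and $\psi>0$ on $(1,\infty)$.

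A minor point: $\varphi(0)/\varphi(1)\approx 1.0058$, not $1.017$; the latter is the ratio of the $Q$ values. So the margin near FCC is even thinner than you assumed.
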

\begin{proof} 
Substituting $a=f=p$ and $b=c=d=e=q$ into \eqref{eq:Fclosed} gives
\[
\det A=4q(p+q)^2
\]
and  
\[
F(p,q,q,q,q,p)=\frac{2}{(\det A)^{5/6} }\left(2p^2\sqrt q + 2q^2\sqrt{2(p+q)} + 4q(q+2p)\sqrt{p+2q}\right).
\]
Note that as $q\to 0^+$, with $p>0$, it holds $F(p,q,q,q,q,p)\to +\infty$. 
We assume then $q>0$ and set $u=p/q\ge 0$: therefore $F$ reduces to  
\[
  F(p,q,q,q,q,p)
  \,=\,
  \frac{2^{1/3}\,H(u)}{(1+u)^{5/3}}\,=:\,\widetilde F(u),
\]
where
\[
  H(u)
  :=
  u^2 + \sqrt{2}\,\sqrt{1+u} + 2(1+2u)\sqrt{u+2}.
  \tag{H}
\]

Notice that
\begin{itemize}
  \item $\displaystyle \widetilde F(1)
    =\frac{3\cdot 2^{2/3}(1+2\sqrt{3})}{4}=F_{\BCC}$;
  \item $\widetilde F(0)
    =3\cdot 2^{5/6}=F_{\mathrm{FCC}}$;
  \item $\displaystyle \lim_{u\to+\infty}\widetilde F(u)=+\infty$.
\end{itemize}
 
Therefore, to conclude the result it is sufficient to prove that 
$\widetilde F$ is strictly decreasing on $(0,1)$ and strictly increasing on $(1,\infty)$, so it has a unique   global minimum at $u=1$, i.e. at $p=q$.

  Define  \[
  \psi(u):=3(1+u)H'(u)-5H(u),\qquad u\ge0.
\]
Then
\[
  \widetilde F'(u)=\frac{2^{1/3}}{3}\,\psi(u)\,(1+u)^{-8/3}.
\]
Since  $\operatorname{sgn}\widetilde F'(u)=\operatorname{sgn}\psi(u)$ for all $u\ge 0$, we are reduced to study the sign of $\psi(u)$. By direct computation one easily checks that $\psi(0)=\psi(1)=0$. 

We will show that  $\psi(u)<0$ for $0<u<1$ and $\psi(u)>0$ for $u>1$. It then follows that $\widetilde F$ is strictly decreasing on $(0,1)$ and strictly increasing on $(1,\infty)$, so that $F(p,q,q,q,q,p)$ has a global minimum at $p=q$, i.e. at  $(1,1,1,1,1,1)$.
 
Observe that  $\psi'(u)= 3(1+u)H''(u)-2H'(u)$ and $\psi''(u)=H''(u)+3(1+u)H'''(u)$, so
  by direct computation
\begin{align*} H'(u)&= 2u+ \frac{\sqrt{2}}{2\sqrt{1+u}}+ \frac{9+6u}{\sqrt{u+2}}\\
H''(u)&=2-\frac{ \sqrt{2}}{4(1+u)^{3/2}} +\frac{3(2u+5)}{2(u+2)^{3/2}}
\\ H'''(u)&= \frac{3\sqrt{2}}{8(1+u)^{5/2}} -\frac{3(2u+7)}{4(u+2)^{5/2}}.\end{align*}
we get \[ \psi''(u)
=
2+\frac{7\sqrt2}{8(u+1)^{3/2}}-
\frac34\,\frac{2u^2+9u+1}{(u+2)^{5/2}}.
\]
Letting $t:=u+2\geq 2$, we get
\[\frac34\,\frac{2u^2+9u+1}{(u+2)^{5/2}}=\frac{3}{4t^{5/2}} (2t^2+t-9)\leq \frac{3}{2t^{1/2}}+\frac{3}{4 t^{3/2}}\leq \frac{3}{2\sqrt{2}}+\frac{3}{8\sqrt{2}}=\frac{15}{8\sqrt{2}}
\]
whence
\[
\psi''(u)\ge 2-\frac{15}{8\sqrt2}>0.
\]
Therefore, the function $\psi$ is strictly convex and smooth for $u\geq 0$, with  $\psi(0)=\psi(1)=0$. Moreover $\psi'(1)>0$. 
It follows that $\psi(u)<0$ for $0<u<1$ and $\psi(u)>0$ for $u>1$, which gives the thesis.
\end{proof}



\begin{thebibliography}{99}

\bibitem{bezdek_sphere_packings}
Bezdek, K. Sphere packings revisited. \textit{European Journal of Combinatorics} 27(6), 864-883, 2006.

\bibitem{B}
Barnes, E. S. The covering of space by spheres. \textit{Canad. J. Math.} 8, 293-304, 1956.

\bibitem{BS}
Barnes, E. S. \& Sloane, N. J. A. The optimal lattice quantizer in three dimensions. 
\textit{SIAM J. Algebraic Discrete Methods} 4(1), 30-41, 1983.

\bibitem{CN}
Cesaroni, A. \& Novaga, M. Minimal periodic foams with equal cells.
A. Cesaroni and M. Novaga,
in ``Anisotropic Isoperimetric Problems and Related Topics",
\textit{Springer INdAM Series} 62, 15-24, 2024.

\bibitem{conway_sloane_voronoi}
Conway, J. H. \& Sloane, N. J. A. On the Voronoi Regions of Certain Lattices. 
\textit{SIAM Journal on Algebraic and Discrete Methods}, 5(3), 294-305, 1984.

\bibitem{Langi}
Zsolt, L. An isoperimetric problem for three-dimensional parallelohedra.
\textit{Pacific J. Math.} 316(1), 169-181, 2022.

\bibitem{Lucarini2008}
Lucarini, V. Three-dimensional Random Voronoi Tessellations: From Cubic Crystal Lattices to Poisson Point Processes. 
\textit{Journal of Statistical Physics} 134, 185-206, 2009.

\end{thebibliography}
\end{document}